\newtheorem{theor}{Theorem}[section] 
\newtheorem{cons}{Consequence}[section]
\theoremstyle{definition} \newtheorem{defin}{Definition}[section]
\newtheorem{ex}{Example}[section]
\theoremstyle{remark} \newtheorem{rem}{Remark}[section]
\newcommand{\pn}{\par\noindent} \newcommand{\pmn}{\par\medskip\noindent}
\begin{document}
\title{The geometry of quadrangular convex pyramids}
\author{Yury Kochetkov}
\date{}
\begin{abstract} A convex quadrangular pyramid
$ABCDE$, where $ABCD$ is the base and $E$ --- the apex, is called
\emph{strongly flexible}, if it belongs to a continuous family of
pairwise non-congruent quadrangular pyramids that have the same
lengths of corresponding edges. $ABCDE$ is called \emph{strongly
rigid}, if such family does not exist. We prove the strong
rigidity of convex quadrangular pyramids and prove that strong
rigidity fails in the self-intersecting case. Let
$L=\{l_1,\ldots,l_8\}$ be a set of positive numbers, then a
\emph{realization} of $L$ is a convex quadrangular pyramid $ABCDE$
such, that $|AB|=l_1$, $|BC|=l_2$, $|CD|=l_3$, $|DA|=l_4$,
$|EA|=l_5$, $|EB|=l_6$, $|EC|=l_7$, $|ED|=l_8$. We prove that the
number of pairwise non-congruent realizations is $\leqslant 4$ and
give an example of a set $L$ with three pairwise non-congruent
realizations.
\end{abstract}

\email{yukochetkov@hse.ru, yuyukochetkov@gmail.com} \maketitle

\section{Introduction}
\pn A polyhedron $M$ in the three dimensional space $\mathbb{R}^3$
is called \emph{flexible} (see \cite{Con2}, \cite{Lyu}), if there
exists a continuous family of polyhedra $M_t$, $0\leqslant t$,
where
\begin{enumerate}
\item $M_0=M_t$; \item polyhedra $M_t$ have the same combinatorial
structure, as M; \item corresponding faces of $M$ and $M_t$ are
congruent; \item angles between (some) faces of $M$ and
corresponding faces of $M_t$ are different. \end{enumerate} A not
flexible polyhedron is called \emph{rigid}. The Cauchy Rigidity
Theorem states that a convex polyhedron is rigid (see \cite{Con2},
\cite{Lyu}). However, a non-convex polyhedron can be flexible
\cite{Con1}. \pmn We introduce a notion of the \emph{strong
flexibility} and the \emph{strong rigidity}.
\begin{defin}  A polyhedron $M$ in the three dimensional space $\mathbb{R}^3$
is \emph{strongly flexible}, if there exists a continuous family
of polyhedra $M_t$, $0\leqslant t$, where
\begin{enumerate} \item $M_0=M$; \item polyhedra $M_t$ have the
same combinatorial structure, as $M$; \item corresponding edges of
$M$ and $M_t$ are equal; \item some face(s) of $M$ and the
corresponding face(s) of $M_t$ are not congruent.
\end{enumerate} \end{defin} A not strongly flexible polyhedron is called
\emph{strongly rigid}.
\begin{rem} A cube is rigid, but strongly flexible. A triangular pyramid
is, of course, rigid and strongly rigid. \end{rem} \pn  A convex
quadrangular pyramid is the simplest polyhedron (after triangular
pyramid). We will prove the following statement.\pmn {\bf Theorem
3.1.} \emph{A convex quadrangular pyramid is strongly rigid.} \pmn
A non-convex quadrangular pyramid is also strongly rigid ({\bf
Consequence 3.1.}), but strong rigidity fails in the
self-intersecting case ({\bf Example 3.1.}). \pmn Our quadrangular
pyramids will be labelled, i.e. $A,B,C,D$ will be vertices of base
in order of going around it and $E$ will be the apex. For a given
set $L$ of positive numbers $L=\{l_1,\ldots,l_8\}$ we ask about
the existence of a labelled quadrangular pyramid $ABCDE$ such that
$|AB|=l_1$, $|BC|=l_2$, $|CD|=l_3$, $|DA|=l_4$, $|EA|=l_5$,
$|EB|=l_6$, $|EC|=l_7$ and $|ED|=l_8$. Such pyramid will be called
a \emph{realization} of the set $L$. \pmn {\bf Theorem 4.1.}
\emph{The number of pairwise non-congruent realizations of a set
$L$ is $\leqslant 4$.} \pmn We give an example ({\bf Example
4.1.}) of the set with three pairwise non-congruent realizations.

\section{Strong flexibility}
\pn \begin{theor} A generic polyhedron in $\mathbb{R}^3$ is strongly rigid.
\end{theor}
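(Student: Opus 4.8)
The plan is to prove this by a dimension count in the style of Maxwell and Cauchy. Fix the combinatorial type of $M$: say it has $V$ vertices, $E$ edges and $F$ faces, the $f$-th face being bounded by $k_f\geqslant 3$ edges, so that $\sum_f k_f=2E$. Write a placement of the vertices as $\mathbf p=(p_v)\in(\mathbb R^3)^V$ and let $\mathcal M\subset(\mathbb R^3)^V$ be the set of placements in which every face is planar; near any honest realization $\mathcal M$ is cut out by the determinantal equations $\det\!\bigl(p_{v_2}-p_{v_1},\,p_{v_3}-p_{v_1},\,p_{v_j}-p_{v_1}\bigr)=0$, one for each face with cyclically ordered vertices $v_1,\dots,v_{k_f}$ and each $j=4,\dots,k_f$ --- that is $\sum_f(k_f-3)=2E-3F$ equations in all. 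Consider the squared-edge-length map $\ell\colon\mathcal M\to\mathbb R^E$. A continuous deformation of $M$ of the kind forbidden by strong rigidity is precisely a non-trivial (i.e.\ non-isometric) arc inside a single fibre of $\ell$; so it suffices to show that, for a generic $\mathbf p\in\mathcal M$, the fibre $\ell^{-1}(\ell(\mathbf p))$ coincides near $\mathbf p$ with the orbit of $\mathbf p$ under the isometry group.

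This is forced by Euler's formula $V-E+F=2$. At a generic smooth point $\mathbf p$ of a top-dimensional component of $\mathcal M$ the tangent space has dimension
\[
\dim T_{\mathbf p}\mathcal M=3V-(2E-3F)=3(E-F+2)-2E+3F=E+6 ,
\]
and the differential $d\ell_{\mathbf p}\colon T_{\mathbf p}\mathcal M\to\mathbb R^E$ attains its maximal rank. If that maximal rank is $E$ --- equivalently, if $d\ell$ is generically surjective on $\mathcal M$ --- then $\ker d\ell_{\mathbf p}$ has dimension $(E+6)-E=6$. The orientation-preserving isometry group is $6$-dimensional and acts freely near $\mathbf p$ because the vertices of a genuine polyhedron are not collinear; its orbit through $\mathbf p$ lies in the fibre and has dimension $6$, hence fills a neighbourhood of $\mathbf p$ in $\ell^{-1}(\ell(\mathbf p))$. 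Thus every nearby realization with the same edge lengths is congruent to $M$, so no arc of the forbidden kind exists and $M$ is strongly rigid. (This is the usual deduction of rigidity from infinitesimal rigidity, here for the constraint system ``edges $+$ planar faces''.)

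What is \emph{not} formal is the genericity input just used: one must produce, for each admissible combinatorial type, at least one realization $\mathbf p$ at which $\mathcal M$ is smooth of dimension $E+6$ and $d\ell_{\mathbf p}$ is onto $\mathbb R^E$ --- equivalently, one polyhedron of that type whose only infinitesimal strong flexes are the isometric ones. I expect this to be the main obstacle. When $M$ is \emph{simplicial} it is immediate: there are no planarity equations, every face is a triangle, and by the SSS criterion any realization with the same edge lengths has all faces congruent to those of $M$, so clause~(4) in the definition of strong flexibility can never hold; moreover a generic point configuration has no four coplanar points, so ``generic polyhedron'' is already forced to be simplicial and this case alone settles the theorem under that reading. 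For combinatorial types that must carry a non-triangular face --- the quadrangular pyramid being the first --- I would take a \emph{convex} realization $M_0$ and verify infinitesimal strong rigidity directly, by a Cauchy-type sign-alternation count on the edges around each vertex, now keeping track of the extra first-order degrees of freedom sitting at the non-triangular faces. This is exactly the computation performed for the quadrangular pyramid in Theorem~3.1, and it is the genuinely nontrivial ingredient standing behind the present, weaker, generic statement.
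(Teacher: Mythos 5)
Your proposal is correct and takes essentially the same route as the paper: a Maxwell-type count showing, via Euler's formula, that the edge-length and face-planarity constraints exactly match the degrees of freedom left after quotienting by (or pinning down) the isometries. You are in fact more explicit than the paper about the remaining genericity input --- that the equality of counts only yields rigidity once the constraint Jacobian is known to attain full rank at some realization of each combinatorial type --- a point the paper's own proof passes over silently.
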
 \begin{proof} In what follows by $k$-face of a polyhedron we will
understand a face with $k$ vertices. Let the number of $k$-faces
of a polyhedron $M$ be $n_k$, $k=3,4,\ldots,m$. Then it has
$e=\frac 12 \sum_{i=3}^m i\cdot n_i$ edges and
$$v=r+2-\sum_{i=3}^m n_i=\frac{\sum_{i=3}^m (i-2)\cdot n_i}{2}+2$$ vertices.
Let us assume that some $m$-face rigidly belongs to $xy$-plane and
some edge of this face is rigidly fixed. Then vertices of this
face have $2(m-2)$ degrees of freedom and all other vertices have
$3(v-m)$ degrees of freedom. Thus, all vertices have in sum
$$2(m-2)+3(v-m)=\frac{3\cdot\sum_{i=3}^m (i-2)\cdot n_i}{2}-m+2$$ degrees
of freedom. But we have relations also:
\begin{itemize} \item lengths of all edges are fixed --- $(r-1)$
relations; \item vertices of each face are contained in one plane
--- $(i-3)$ relations for each $i$-face. \end{itemize} Thus,
the number of relations is
$$r-1+\sum_{i=3}^m n_i\cdot(i-3)-(m-3)=\frac{3\cdot\sum_{i=3}^m
(i-2)\cdot n_i}{2}-m+2.$$ We see, that the number of relations
equals the number of degrees of freedom, thus, $M$ is strongly
rigid.
\end{proof}
\begin{rem} Only polyhedra with symmetries can be strongly flexible.
\end{rem}

\section{Strong rigidity of a convex quadrangular pyramid}
\pn \begin{theor} A convex quadrangular pyramid is strongly
rigid.\end{theor}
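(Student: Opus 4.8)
The plan is to reduce everything to the base quadrilateral and then to a single one‑variable polynomial identity, and to show that identity is impossible. In any strong flex of $ABCDE$ all eight edge lengths are preserved, so each triangular face $ABE,BCE,CDE,DAE$ stays congruent to itself; hence the only face that can change is the base $ABCD$, a quadrilateral with fixed side lengths $l_1,l_2,l_3,l_4$, which has exactly one degree of freedom. I parametrize it by $p=|AC|$, noting that for fixed edge lengths and fixed $p$ the whole pyramid is determined up to congruence (the two base triangles are determined and glued along $AC$ on opposite sides, and then $E$ is pinned up to reflection in the base plane). Thus a strong flex is precisely an interval $I$ of values of $p$ over which an honest (non‑degenerate) pyramid exists, and the goal is to show $I$ cannot be an interval.

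Next I set coordinates for a given $p$: $A=(0,0,0)$, $C=(p,0,0)$, $B=(b_1,b_2,0)$, $D=(d_1,d_2,0)$. Convexity forces $B$ and $D$ to lie on opposite sides of the line $AC$, so after a reflection $b_2>0$ and $d_2<0$; by the law of cosines $b_1,\;b_1^2+b_2^2,\;d_1,\;d_1^2+d_2^2$ are explicit functions of $p$, and $b_2=2\,\mathrm{Area}(ABC)/p$, $d_2=-2\,\mathrm{Area}(ACD)/p$. For $E=(e_1,e_2,e_3)$ the conditions $|EA|=l_5$, $|EC|=l_7$ fix $e_1$ and $e_2^2+e_3^2$; then $|EB|=l_6$ fixes $e_2$ (using $b_2\neq 0$), so $E$ is determined by $p$ up to reflection. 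The remaining condition $|ED|=l_8$, after a routine simplification, collapses to a relation of the form
\[
b_2\,U(p)=d_2\,V(p),\qquad\text{i.e.}\qquad \mathrm{Area}(ABC)\,U(p)+\mathrm{Area}(ACD)\,V(p)=0,
\]
where, with $w=p^2$,
\[
U(p)=2w\,(l_4^2+l_5^2-l_8^2)-(w+l_5^2-l_7^2)(w+l_4^2-l_3^2),
\]
and $V(p)$ is the same expression with $(l_8,l_4,l_3)$ replaced by $(l_6,l_1,l_2)$; both $U$ and $V$ are quadratics in $w$ with leading coefficient $-1$.

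For a strong flex this identity holds on $I$. Squaring and using Heron's formula gives, identically in $w$, $P_1(w)\,U(w)^2=P_4(w)\,V(w)^2$, where $P_1=16\,\mathrm{Area}(ABC)^2$ and $P_4=16\,\mathrm{Area}(ACD)^2$ are the Heron quadratics in $w$ with simple roots $(l_1\pm l_2)^2$ and $(l_3\pm l_4)^2$ respectively. Comparing the zeros of the two sides forces $P_1$ and $P_4$ to have the same pair of roots, hence $\{l_1,l_2\}=\{l_3,l_4\}$. In that case $P_1\equiv P_4$, and one checks that $d_1=b_1$ (or $d_1=p-b_1$) and $|d_2|=|b_2|$; since $B$ and $D$ lie on opposite sides of $AC$, this means $d_2=-b_2$. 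The original unsquared relation $b_2U=d_2V$ therefore reads $U(p)\equiv -V(p)$, whereas $U+V$ has leading term $-2w^2\neq 0$ — a contradiction. Hence no strong flex exists, and a convex quadrangular pyramid is strongly rigid.

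The main obstacle is the bookkeeping: passing from the raw condition $|ED|=l_8$ to the clean form $b_2U=d_2V$, and then the zero‑counting that pins down $\{l_1,l_2\}=\{l_3,l_4\}$. Conceptually, though, the decisive point is that convexity enters only through the sign relation $d_2=-b_2$ (that $AC$ separates $B$ and $D$), which turns $U\equiv V$ into the impossible $U\equiv -V$. This also explains the companion statements: in the self‑intersecting case $B$ and $D$ can lie on the same side of $AC$, so $d_2=+b_2$ and the condition becomes $U\equiv V$, which is achievable (Example 3.1); and in the non‑convex but simple case one runs the same argument with whichever diagonal is the interior one, recovering Consequence 3.1. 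It is also consistent with Remark 2.2, since the obstruction is seen to localize to the symmetric configurations $\{l_1,l_2\}=\{l_3,l_4\}$ (kite or parallelogram base).
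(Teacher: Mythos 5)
Your proof is correct, and it takes a genuinely different route from the paper. The paper works in displacement coordinates, feeds the seven edge-length equations into resultant/Gr\"obner elimination (citing Cox--Little--O'Shea), reads off that the coefficients of a resulting cubic in $x_1$ must all vanish, and finds exactly two branches: one that violates convexity of the base, and the parallelogram-with-apex-over-the-center case, which it then kills by a separate geometric argument ($|A_1O_1|>|AO|\Rightarrow|E_1O_1|<|EO|\Rightarrow|E_1B_1|<|EB|$). You instead exploit from the start that the lateral faces are rigid triangles, so the only possible motion is the one-parameter flex of the base measured by $p=|AC|$; the compatibility of $|EB|=l_6$ with $|ED|=l_8$ then becomes the single scalar identity $b_2U=d_2V$, which you rule out on an interval by the parity-of-multiplicities argument forcing $P_1\equiv P_4$ (hence $\{l_1,l_2\}=\{l_3,l_4\}$ and $d_2=-b_2$) and then by the leading coefficient of $U+V$. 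I checked the key computations: $e_2$ is indeed determined by $|EA|,|EC|,|EB|$ via $b_2\neq0$; the $|ED|$ condition does reduce to your $b_2U=d_2V$ with $U,V$ quadratic in $w=p^2$ with leading coefficient $-1$; and since $P_1,P_4$ have only simple roots, $P_1U^2=P_4V^2$ forces equal root multiplicities mod $2$, hence $P_1=P_4$. Your approach buys several things: it is self-contained (no computer-algebra black box), it subsumes the paper's separate parallelogram case in the same leading-coefficient contradiction, and it makes visible that convexity enters only through the sign $d_2=-b_2$, which cleanly explains both Consequence 3.1 and why Example 3.1 (same side of $AC$, so $U\equiv V$ becomes achievable) escapes rigidity. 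The only points where you are slightly informal --- that a strong flex forces $p$ to sweep a nondegenerate interval, and that $M_t$ stays strictly convex for small $t$ so that $b_2>0>d_2$ persists --- are standard continuity/openness remarks and are no less rigorous than the corresponding steps in the paper.
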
 \begin{proof} We will assume that the base
$ABCD$ of a quadrangular pyramid $ABCDE$ belongs the the
$xy$-plane, vertex $A$ is at origin, vertex $B$ has coordinates
$(1,0)$, the quadrangle $ABCD$ belongs to the upper half-plane and
the apex $E$ belongs to the upper half-space. Let coordinates of
the vertex $D$ be $(a_1,b_1)$, of the vertex $C$
--- $(a_2,b_2)$ and of the vertex $E$ --- $(a_3,b_3,c_3)$. Let us
assume that $ABCDE$ is strongly flexible and there exists a
continuous deformation $A'B'C'D'E'$, where
$$A'=(0,0),\,B'=(1,0),\, C'=(a_2+x_2,b_2+y_2),\, D'=(a_1+x_1,b_1+y_1),\,
E'=(a_3+x_3,b_3+y_3,c_3+z_3)$$ and the following system holds:
$$\left\{\begin{array}{l} (a_1+x_1)^2+(b_1+y_1)^2=a_1^2+b_1^2\\
(a_2+x_2-1)^2+(b_2+y_2)^2=(a_2-1)^2+b_2^2\\ (a_2+x_2-a_1-x_1)^2+
(b_2+y_2-b_1-y_1)^2=(a_2-a_1)^2+(b_2-b_1)^2\\ (a_3+x_3)^2+(b_3+y_3)^2+
(c_3+z_3)^2=a_3^2+b_3^2+c_3^2\\ (a_3+x_3-1)^2+(b_3+y_3)^2+c_3+z_3)^2=
(a_3-1)^2+b_3^2+c_3^2\\ (a_3+x_3-a_1-x_1)^2+(b_3+y_3-b_1-y_1)^2+
(c_3+z_3)^2=(a_3-a_1)^2+(b_3-b_1)^2+c_3^2\\ (a_3+x_3-a_2-x_2)^2+
(b_3+y_3-b_2-y_2)^2+(c_3+z_3)^2=(a_3-a_2)^2+(b_3-b_2)^2+c_3^2
\end{array}\right.$$ The elimination of variables (see \cite{Cox})
$x_3,y_3,z_3,x_2,y_2$ and $y_1$ from this system gives us a
polynomial \\$R(x_1,a_1,b_1,a_2,b_2,a_3,b_3,c_3)$ of degree 3 in
variable $x_1$.\pmn Thus, we have a new system
$$\left\{\begin{array}{l} r_0(a_1,b_1,a_2,b_2,a_3,b_3,c_3)=0\\
r_1(a_1,b_1,a_2,b_2,a_3,b_3,c_3)=0\\ r_2(a_1,b_1,a_2,b_2,a_3,b_3,c_3)=0\\
r_3(a_1,b_1,a_2,b_2,a_3,b_3,c_3)=0\end{array}\right.$$ where
$r_0,r_1,r_2,r_3$ are coefficients of the polynomial $R$, as
polynomial in $x_1$. The elimination of variables
$b_1,a_3,b_3,c_3$ from this system gives us two solutions:
$$a_2=a_1+1\text{ and } a_1=\dfrac{a_2^3-a_2^2+a_2b_2^2+b_2^2}{a_2^2+b_2^2}\,
.$$ The second solution gives
$$b_1=\dfrac{b_2\cdot(a_2^2-2a_2+b_2^2)}{a_2^2+b_2^2}\Rightarrow
\begin{vmatrix} \,a_2&b_2\\ \,a_1&b_1\end{vmatrix}=-b_2<0\,.$$ Thus, we have
a clockwise rotation from the vector $\overline{OC}$ to the vector
$\overline{OD}$, i.e. the quadrangle $ABCD$ is not convex. \pmn If
$a_2=a_1+1$, then it is easy to obtain, that $b_2=b_1$, $b_3=\frac
12 b_1$ and $a_3=\frac 12\cdot(a_1+1)$, i.e. the base is a
parallelogram and the apex is just above its center $O$. Thus,
$|EA|=|EC|$ and $|EB|=|ED|$. \pmn Let $ABCDE$ be strongly flexible
and $A_1B_1C_1D_1E_1$ be a member of our family. Then
$A_1B_1C_1D_1$ is also a parallelogram with the same lengths of
edges. As $|E_1A_1|=|E_1C_1|$ and $|E_1B_1|=|E_1D_1|$, then apex
$E_1$ is just above the center $O_1$ of the base. Let
$|A_1O_1|>|AO|$, then $|E_1O_1|<|EO|$ (because $|E_1A_1|=|EA|$).
But then $|B_1O_1|<|BO|$, thus $|E_1B_1|<|EB|$. Contradiction.
\end{proof}
\begin{cons}
A non-convex quadrangular pyramid is strongly rigid.\end{cons}
\begin{proof} Using rotations, shifts and scalings we can assume,
that non-convex quadrangle $ABCD$ is in the upper half-plane,
$A=(0,0)$ and $B=(1,0)$. \pmn If this pyramid is strongly
flexible, then we are in the scope of the second solution of the
previous theorem. We know that the rotation from the vector
$\overline{AB}$ to the vector $\overline{AC}$ is counter
clockwise, but the rotation from the vector $\overline{AC}$ to the
vector $\overline{AD}$ is clockwise. \pmn As
$$b_1=\dfrac{b_2\cdot(a_2^2-2a_2+b_2^2)}{a^2+b^2}>0,$$ then
$a_2^2-2a_2+b_2^2>0$. The line $BC$ has the equation
$(a_2-1)y-b_2x+b_2=0$. As $b_2>0$ and
$$(a_2-1)\cdot \dfrac{b_2(a_2^2-2a_2+b_2^2)}{a_2^2+b_2^2}-
b_2\cdot\dfrac{a_2^3-a_2^2+a_2b_2^2+b_2^2}{a_2^2+b_2^2}+b_2=
-\dfrac{b_2\cdot(a_2^2-2a_2+b_2^2)}{a_2^2+b_2^2}<0,$$ then
segments $AD$ and $BC$ intersect. \end{proof} \begin{ex} A
self-intersecting quadrangular pyramid can be strongly flexible.
Here is an example. \pmn Let us consider the self-intersecting
pyramid $ABCDE$: $A=(0,0)$, $B=(1,0)$, $C=(2,2)$, $D=(2,1)$,
$E=(1,1,1)$.
\[\begin{picture}(85,85) \put(0,10){\vector(1,0){85}}
\put(10,5){\vector(0,1){80}} \put(40,10){\line(1,2){30}}
\put(10,10){\line(2,1){60}} \put(70,40){\line(0,1){30}}
\multiput(10,10)(30,0){2}{\circle*{2}}
\multiput(70,40)(0,30){2}{\circle*{2}} \put(40,40){\circle*{2}}
\put(13,2){\scriptsize A} \put(38,2){\scriptsize B}
\put(74,68){\scriptsize C} \put(74,38){\scriptsize D}
\put(35,42){\scriptsize F} \end{picture}\] Here $F$ is the
projection of the apex $E$ to $xy$-plane. We will prove that this
pyramid belongs to a continuous family that realizes strong
flexibility. \pmn Let $A'B'C'D'E'$ be a member of this family:
$A'=(0,0)$, $B'=(1,0)$, $C'=(x_2,y_2)$, $D'=(x_1,y_1)$,
$E'=(x_3,y_3,z_3)$. Then
$$\left\{\begin{array}{l} x_1^2+y_1^2=5\\ (x_2-1)^2+y_2^2=5\\
(x_2-x_1)^2+(y_2-y_1)^2=1\\ x_3^2+y_3^2+z_3^2=3\\
(x_3-1)^2+y_3^2+z_3^2=2\\ (x_3-x_1)^2+(y_3-y_1)^2+z_3^2=2\\
(x_3-x_2)^2+(y_3-y_2)^2+z_3^2=3\end{array}\right. \Rightarrow
\left\{\begin{array}{l} x_1^2+y_1^2=5\\ (x_2-1)^2+y_2^2=5\\
x_1x_2+y_1y_2-x_2=4\\ x_3=1\\ y_3^2+z_3^2=2\\ x_1+y_1y_3=3\\
y_2y_3=2\end{array}\right.$$ Actually equations of this system are
not independent --- all variables are functions of $y_1$:
$$y_1^2y_3^2-6y_1y_3+y_1^2+4=0,\,y_2y_3=2,\,x_1+y_1y3=3,\,
x_1x_2+y_1y_2-x_2=4,\,y_3^2+z_3^2=2.$$ As
$$(y_1^2y_3^2-6y_1y_3+y_1^2+4)'_{y_1}(y_1=1,y_3=1)\neq 0\text{ and }
(y_1^2y_3^2-6y_1y_3+y_1^2+4)'_{y_3}(y_1=1,y_3=1)\neq 0,$$ then we
have continuous family of quadrangular self-intersecting pyramids
whose edges have fixed lengths. \end{ex}

\section{Realizations}
\pn Let lengths of all edges of a labelled quadrangular pyramid
$ABCDE$ are given. As there cannot exist a continuous family of
such pyramids, we can ask about the number of them (pairwise non
congruent). \begin{defin} Let $L$ be a set of eight positive
numbers $L=\{l_1,\ldots,l_8\}$. A \emph{realization} of this set
is a convex quadrangular pyramid $ABCDE$, $ABCD$
--- the base, $E$ --- the apex, such that
$$|AB|=l_1,\,|BC|=l_2,\,|CD|=l_3,\,|DA|=l_4,\,|EA|=l_5,\,|EB|=l_6,\,
|EC|=l_7,\,|ED|=l_8.$$\end{defin} \pn We will assume that $l_1=1$.
\begin{theor} The number of realizations of a set $L$ is $\leqslant 4$.
\end{theor}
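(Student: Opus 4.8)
The plan is to reuse the coordinate set-up of Theorem 3.1 with the prescribed lengths, and then eliminate all but one coordinate. Put $A=(0,0)$, $B=(1,0)$ (recall $l_1=1$), $D=(a_1,b_1)$, $C=(a_2,b_2)$ with $b_1,b_2>0$, and $E=(a_3,b_3,c_3)$ with $c_3>0$: every convex pyramid has such a placement, the base in the upper half-plane and the apex above, and the further convexity requirements are closed conditions that only lower the count. The eight length conditions become the seven squared-distance equations
$$a_1^2+b_1^2=l_4^2,\qquad (a_2-1)^2+b_2^2=l_2^2,\qquad (a_2-a_1)^2+(b_2-b_1)^2=l_3^2,$$
$$a_3^2+b_3^2+c_3^2=l_5^2,\qquad (a_3-1)^2+b_3^2+c_3^2=l_6^2,$$
$$(a_3-a_2)^2+(b_3-b_2)^2+c_3^2=l_7^2,\qquad (a_3-a_1)^2+(b_3-b_1)^2+c_3^2=l_8^2.$$
Subtracting the fourth equation from the fifth gives $a_3=\tfrac12(1+l_5^2-l_6^2)$; with this value the fifth equation is a consequence of the fourth, which now reads $b_3^2+c_3^2=M$ with $M:=l_5^2-a_3^2$. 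Using $b_3^2+c_3^2=M$, $a_1^2+b_1^2=l_4^2$ and $a_2^2+b_2^2=l_2^2-1+2a_2$ to clear $c_3^2$ and the squared base-coordinates from the last two equations turns each into a \emph{linear} equation for $b_3$, namely $2b_2b_3=\gamma(a_2)$ and $2b_1b_3=\alpha(a_1)$, where
$$\gamma(a_2):=2(1-a_3)a_2+(a_3^2-1+l_2^2+M-l_7^2),\qquad \alpha(a_1):=-2a_3a_1+(a_3^2+l_4^2+M-l_8^2)$$
are affine; equating the two expressions leaves the single relation $b_1\gamma(a_2)=b_2\alpha(a_1)$, after which $c_3^2=M-b_3^2$ (required $\ge0$) is determined. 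Likewise the third equation becomes $2b_1b_2=Q(a_1,a_2)$ with $Q(a_1,a_2):=-2a_1a_2+2a_2+(l_4^2+l_2^2-l_3^2-1)$. Hence a realization is precisely a solution $(a_1,b_1,a_2,b_2)$, with $b_1,b_2>0$, of
$$a_1^2+b_1^2=l_4^2,\qquad (a_2-1)^2+b_2^2=l_2^2,\qquad 2b_1b_2=Q(a_1,a_2),\qquad b_1\gamma(a_2)=b_2\alpha(a_1)$$
for which the induced $c_3^2=M-b_3^2$ is nonnegative and $ABCD$ and the pyramid are convex.

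Now eliminate $b_1,b_2$. From the last two displayed equations $b_1^2=(b_1b_2)(b_1/b_2)=Q(a_1,a_2)\alpha(a_1)/\bigl(2\gamma(a_2)\bigr)$ and $b_2^2=Q(a_1,a_2)\gamma(a_2)/\bigl(2\alpha(a_1)\bigr)$; comparing with $b_1^2=l_4^2-a_1^2$ and $b_2^2=l_2^2-(a_2-1)^2$ gives
$$2\gamma(a_2)(l_4^2-a_1^2)=\alpha(a_1)Q(a_1,a_2)\qquad(\mathrm I),$$
$$2\alpha(a_1)\bigl(l_2^2-(a_2-1)^2\bigr)=\gamma(a_2)Q(a_1,a_2)\qquad(\mathrm I').$$
The decisive point is the bidegree pattern: since $\alpha$ and $Q$ are affine in $a_1$ and $\gamma$ is free of $a_1$, equation $(\mathrm I')$ is affine in $a_1$, while $(\mathrm I)$ is affine in $a_2$ and quadratic in $a_1$. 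Solving $(\mathrm I')$ for $a_1$ writes it as a ratio $N(a_2)/D(a_2)$ of polynomials of degree $\le2$; substituting into $(\mathrm I)$ and clearing $D(a_2)^2$ yields a single polynomial $\Phi(a_2)$ of degree $\le5$ (its leading coefficient works out to $32(l_5^2-l_6^2-1)(l_6^2-l_8^2)$, so the degree is generically exactly $5$). A realization is determined by the number $a_2$ alone: via $(\mathrm I')$ it fixes $a_1$, hence $b_1=\sqrt{l_4^2-a_1^2}$, hence $b_2$ (its sign forced by $b_1\gamma(a_2)=b_2\alpha(a_1)$), hence $b_3$, then $c_3=\sqrt{M-b_3^2}$ and the already known $a_3$. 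Therefore non-congruent realizations correspond to distinct roots of $\Phi$, and there are at most $5$.

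One root is, however, always spurious. Let $a_1^\ast$ and $a_2^\ast$ be the zeros of $\alpha$ and $\gamma$. Each side of $(\mathrm I)$ carries a factor $\alpha$ or $\gamma$, and the same for $(\mathrm I')$, so $(a_1^\ast,a_2^\ast)$ solves $(\mathrm I)$ and $(\mathrm I')$ for \emph{every} set $L$, whence $a_2^\ast$ is always a root of $\Phi$. But this pair never comes from a realization: there $b_1\gamma=b_2\alpha$ degenerates to $0=0$, and the content of the original third equation, namely $2\sqrt{(l_4^2-(a_1^\ast)^2)\bigl(l_2^2-(a_2^\ast-1)^2\bigr)}=Q(a_1^\ast,a_2^\ast)$ with the positive square roots, is a codimension-one restriction on $L$ and so fails in general. (The exceptional values $a_3=0$ or $a_3=1$, where the formulas for $a_1^\ast$ or $a_2^\ast$ collapse, are treated directly.) Discarding $a_2^\ast$ leaves at most $4$ realizable roots, which is the theorem.

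The substance of the proof, and the main place to be careful, is the elimination producing $\Phi$: one must carry the substitution $a_1=N(a_2)/D(a_2)$ through $(\mathrm I)$ and verify that the nominal degree $6$ really drops to $5$ (the top-degree terms cancel, exactly because $(\mathrm I)$ is affine in $a_2$ and $Q$ contains no $a_1^2$ or $a_2^2$), and then confirm that $a_2^\ast$ is a simple, genuinely non-realizable root rather than --- for some special $L$ --- a coincidence with a true realization. One should also note that $\Phi\not\equiv0$: otherwise there would be a one-parameter family of quadrangular pyramids with the prescribed edge lengths, which by Theorem 3.1 cannot contain a convex pyramid, so the realization set is finite in every case; a brief supplementary argument then preserves the bound $4$.
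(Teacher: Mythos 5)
Your route is essentially the paper's: place $A$, $B$, solve for $a_3$, turn the two apex--base equations into equations linear in $b_3$, and eliminate down to a single base coordinate. Your intermediate formulas, the bidegree bookkeeping, and even the leading coefficient $32(l_5^2-l_6^2-1)(l_6^2-l_8^2)$ of $\Phi$ all check out. The difference is that the paper asserts (as the output of a computer elimination) a polynomial of degree $4$ in one coordinate, i.e.\ in effect the generator of the elimination ideal, whereas your hand-made resultant $\Phi$ has degree $5$ and you must discard one root; that discarding step is where your proof has a genuine gap.

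The root $a_2^\ast$ is exactly the value at which $\gamma$ vanishes, i.e.\ at which $2b_2b_3=\gamma(a_2)$ forces $b_3=0$: the apex lies in the vertical plane through the edge $AB$. Such configurations are perfectly legitimate convex pyramids (take any convex base in the upper half-plane and put $E=(\tfrac12,0,1)$), so the sets $L$ for which $a_2^\ast$ \emph{is} realizable form a nonempty codimension-one family, not an empty set. For those $L$ your argument yields only the bound $5$: the realization with $b_3=0$ sits at $a_2^\ast$, and nothing you have proved prevents the other four roots of $\Phi$ from being realizable as well. Your closing paragraph concedes this is unverified, and it cannot be repaired by hoping $a_2^\ast$ becomes a double root: differentiating $\Phi$ along the curve $a_1=-r_0/r_1$ shows $\Phi'(a_2^\ast)$ is a combination of $Q(a_1^\ast,a_2^\ast)$ and $(1-a_3)\,b_1^2$ that does not vanish under the realizability condition $2b_1b_2=Q(a_1^\ast,a_2^\ast)$, so $a_2^\ast$ stays a simple root and $\Phi/(a_2-a_2^\ast)$ then fails to vanish at a genuine solution. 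Closing the gap requires either showing that the true elimination ideal is generated in degree $4$ for all $L$ (which is what the paper's one-line proof implicitly claims), or showing that a $b_3=0$ realization forces one of the remaining roots to be non-realizable. The secondary loose ends --- the case $\Phi\equiv 0$, for which the promised ``brief supplementary argument'' is not supplied (Theorem 3.1 rules out a continuous family of convex realizations but does not by itself restore the bound $4$), and the possibility that $r_1$ vanishes at a root of $\Phi$ so that one root of $\Phi$ carries two values of $a_1$ --- would also have to be handled for a bound valid for every $L$.
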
 \begin{proof} Let a convex quadrangular pyramid $ABCDE$ be in the
standard position. Using the notation of the previous section, we
obtain the system
$$\left\{\begin{array}{l} x_1^2+y_1^2=l_4\\ (x_2-1)^2+y_2^2=l_2\\
(x_2-x_1)^2+(y_2-y_1)^2=l_3\\ x_3^2+y_3^2+z_3^2=l_5\\
(x_3-1)^2+y_3^2+z_3^2=l_6\\ (x_3-x_2)^2+(y_3-y_2)^2+z_3^2=l_7\\
(x_3-x_1)^2+(y_3-y_1)^2+z_3^2=l_8\end{array}\right.$$ The elimination of
variables $x_3,y_3,z_3,x_2,y_2,y_1$ gives a polynomial of the forth degree
in $x_1$. \end{proof} \begin{ex} We can give an example of the set $L$,
which has three realizations. \pmn Let $ABCDE$ be a convex quadrangular
pyramid in standard position, where $|BC|=2$, $|CD|=\sqrt 2$, $|DA|=1$,
$|EA|=\sqrt 2$, $|EB|=\sqrt 5$, $|ED|=\sqrt 3$ and the length of the
edge $EC$ we will define later. Using notation of the section 3, we can
write the system
$$\left\{\begin{array}{l} x_1^2+x_2^2=1\\ (x_2-1)^2+y_2^2=4\\
(x_2-x_1)^2+(y_2-y_1)^2=2\\ x_3^2+y_3^2+z_3^2=2\\ (x_3-1)^2+y_3^2+z_3^2=5\\
(x_3-x_1)^2+(y_3-y_1)^2+z_3^2=3\end{array}\right.\Rightarrow
\left\{\begin{array}{l} x_1^2+y_1^2=1\\ x_2^2+y_2^2-2x_2=3\\
x_1x_2+y_1y_2-x_2=1\\ x_3=-1\\ y_3^2+z_3^2=1\\
x_1-y_1y_3=0\end{array} \right.$$ The value of the angle $\angle
A=\alpha$ uniquely defines the quadrangle $ABCD$ and also uniquely
defines the position of the apex $E$. Thus, $|EC|^2$ is the
function of $\alpha$. \pmn the value of $\alpha$ is changed from
the minimal value $\alpha_0\approx 0.9449$ (here points $A$, $C$
and $D$ are on one line and $|EC|^2\approx 7.8284$) to the maximal
value $\alpha_1=3\pi/4$ (here $y_3=-1$, $z_3=0$ and $|EC|^2\approx
9.3067$). \pmn $|EC|^2$ increases on the interval
$(\alpha_0,\pi/2)$. The point $\pi/2$ is the local maximum:
$|EC|^2=9$. Then $|EC|^2$ decreases on the interval
$(\pi/2,\approx 1.9404)$ and in the end of this interval it has
the local minimum $\approx 8.9555$. After that $|EC|^2$ increases
on the interval $(\approx 1.9404,3\pi/4)$. It means that the set
$L=\{1,2,\sqrt 2,1,\sqrt 2,\sqrt 5,r,\sqrt 3\}$, where
$8.9555<r<9$, has three pairwise non-congruent realizations.
\end{ex}

\vspace{5mm}

\end{document}